\newtheorem{lemma}{Lemma}[section]
\newtheorem{proposition}[lemma]{Proposition}
\newtheorem{hypothesis}[lemma]{Hypothesis}
\newtheorem{claim}[lemma]{Claim}
\theoremstyle{definition}
\title[Correction]{Correction to: ``A spectral sequence for stratified spaces and configuration spaces of points''}
\author{Nir Gadish and Dan Petersen}
\begin{document}

\maketitle

The goal of this note is to correct some oversights in the paper \cite{spectralsequencestratification} by the second named author. The issue with Lemma 4.14 explained below was noticed by the the first author in the spring of 2018, and the workaround described in this note was worked out jointly during a visit of the first author to Stockholm in the spring of 2019. 

Let us briefly recall the setting of Section 4 of \cite{spectralsequencestratification}: $M$ denotes a fixed ambient space, and for any finite set $S$ there is defined an open subspace $F_{\mathcal A}(M,S)$ of ``$\mathcal A$-avoiding'' configurations inside the cartesian product $M^S$. For a sheaf $F$ on $M^S$ there is an explicit complex of sheaves $L^\bullet_F$ on $M^S$, such that $L^\bullet_F$ is quasi-isomorphic to the restriction of $F$ to $F_{\mathcal A}(M,S)$, extended by zero. When $F$ is the constant sheaf this complex of sheaves is denoted $L^\bullet(S)$. The goal is to prove representation stability for the Borel--Moore homology of $F_{\mathcal A}(M,S)$ under mild assumptions on $M$ and $\mathcal A$, by analyzing the combinatorics of $L^\bullet(S)$. This combinatorial structure is governed by the collection of posets $P_{\mathcal A}(S)$ (where $P_{\mathcal A}(S)$ is the poset of strata for the natural stratification of $M^S$, with the open stratum $F_{\mathcal A}(M,S)$ being the minimal element), and the natural map $P_{\mathcal A}(S) \times P_{\mathcal A}(T) \hookrightarrow P_{\mathcal A}(S \sqcup T)$ which is induced by the fact that the product of two closed strata is again a closed stratum. 

\section{The product in the twisted commutative algebra}\label{subsec: product}

The first oversight in the paper we want to discuss is rather minor, but worth making explicit. At the top of p.\ 2548 it is claimed that there is a natural map $L^\bullet(S) \boxtimes L^\bullet(T) \to L^\bullet(S \sqcup T)$. But in fact the only natural map goes in the other direction.  This error is fortunately cancelled by an equal and opposite error in the following sentence: a map in this direction is exactly what is needed to make $S \mapsto H^{-\bullet}(M^S,\mathbb D L^\bullet(S))$ a twisted commutative algebra, contrary to what the paper claims. Let us make explicit what this map of complexes of sheaves looks like. 

Note first that the space $M^{S \sqcup T} \cong M^S \times M^T$ has two natural stratifications: one with poset of strata $P_{\mathcal A}(S \sqcup T)$, and one with poset of strata $P_{\mathcal A}(S) \times P_{\mathcal A}(T)$. The former stratification is a refinement of the latter one. Now the functoriality of the construction $L^\bullet$ with respect to such refinements gives a map
$ L^\bullet(S \sqcup T) \to \widetilde L^\bullet,$
where 
$\widetilde L^\bullet$ is the resolution of the constant sheaf on $F_{\mathcal A}(M,S) \times F_{\mathcal A}(M,T)$ extended by zero to $M^S \times M^T$. The complex $\widetilde L^\bullet$ is a sum over chains in the product poset $P_{\mathcal A}(S) \times P_{\mathcal A}(T)$, with a differential given by \emph{inserting} elements into a chain, i.e.\ the linear dual of the usual homological differential defined by omitting an element of a chain. Thus the linear dual of the Eilenberg--Zilber shuffle map defines a map $\widetilde L^\bullet \to L^\bullet(S) \boxtimes L^\bullet(T)$. 
Taking Verdier duals gives product maps \[ \mathbb D L^\bullet(S) \boxtimes \mathbb D L^\bullet(T) \rightarrow \mathbb D L^\bullet(S \sqcup T),\]
which are the ones we will be most concerned with.

\section{A subposet which is not an order ideal}
 The main subject of this note is an incorrect claim at the bottom of p.\ 2547, that the natural injection 
\begin{equation}\label{eq:poset_product}
 P_{\mathcal A}(S) \times P_{\mathcal A}(T) \hookrightarrow P_{\mathcal A}(S \sqcup T)\end{equation}
identifies the left hand side with an order ideal (i.e.\ a downwards closed subset) in the poset on the right hand side. Now although the left hand side of (1) is an order ideal in many naturally occurring examples, it is not true in general, and we do not see a non-tautological hypothesis that one could add to make this true in general. 

The fact that \eqref{eq:poset_product} is not necessarily an order ideal invalidates the proof of Theorem 4.15, which is the main result of Section 4 of the paper. Indeed, the proof of Theorem 4.15 relies on Lemma 4.14, which uses the claim that \eqref{eq:poset_product} is an order ideal, specifically in its first line claiming that one needs only keep track of indecomposable elements. This implicitly assumes that the induced multiplication
\[
\bigoplus_{i+j=n} H_{i-2}(\hat{0},\beta) \otimes H_{j-2}(\hat{0},\beta') \rightarrow H_{n-2}(\hat{0},\beta\times \beta')
\]
is surjective, as would follow from \eqref{eq:poset_product} being an order ideal: that would already imply that the product $[\hat{0},\beta]\times [\hat{0},\beta'] \rightarrow [\hat{0},\beta\times \beta']$ is an isomorphism of posets.

However, one can run the proof of Theorem 4.15 under a weaker hypothesis than \eqref{eq:poset_product} being an order ideal, as we will now explain. The key observation is that the only part of product structure that comes into the proof of Theorem 4.15 is multiplication by the trivial element $\hat{0}\in P_{\mathcal{A}}(1)$, where $1$ denotes a one-element set. We will require an additional hypothesis concerning the arrangement of subspaces $\mathcal A$: specifically, we must strengthen the hypothesis mentioned in the paragraph below Example 4.8. This hypothesis asked that no chosen configuration $A_i\in \mathcal{A}$ is \emph{equal} to the preimage of some set $A_i'$ under the coordinate projection maps. The following stronger assumption turns out to be sufficient for representation stability of $\mathcal{A}$-avoiding configuration spaces:

\begin{hypothesis}\label{hyp:no_axis}
We suppose that $\mathcal{A}$ is a finite collection of closed subsets $A_i\subseteq X^{S_i}$ that do \emph{not} contain any `coordinate axis': that is, identifying $X^{S_i}$ with $X^{S_i- \{s\}} \times X$, we must have
\[
 \{ \bar{x} \} \times X \not\subseteq A_i
\]
for any $\bar{x}\in X^{S_i- \{s\}}$ and any $s\in S_i$.
\end{hypothesis}

\begin{claim}\label{claim:order_ideal}
Under Hypothesis \ref{hyp:no_axis}, the multiplication
\begin{equation}
    P_{\mathcal{A}}(S) \times \{\hat{0}\} \subset P_{\mathcal{A}}(S)\times P_{\mathcal{A}}(1) \rightarrow P_{\mathcal{A}}(S\sqcup 1)
\end{equation}
identifies $P_{\mathcal{A}}(S)$ with an order ideal of $P_{\mathcal{A}}(S\sqcup 1)$.
\end{claim}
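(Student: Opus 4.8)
The plan is to make everything explicit at the level of subspaces. Recall that an element $\beta\in P_{\mathcal A}(S)$ is recorded by a nonempty subspace of $X^{S}$ of the form $\bigcap_{\alpha}\iota_{\alpha}^{*}A_{i_\alpha}$, where each $\iota_\alpha$ is the coordinate projection $X^{S}\to X^{S_{i_\alpha}}$ attached to an injection $S_{i_\alpha}\hookrightarrow S$, and the order is reverse inclusion with $\hat 0=X^{S}$ the empty intersection. Writing $1=\{\ast\}$ and $X^{S\sqcup 1}=X^{S}\times X$ with the second factor the $\ast$-coordinate, the map of the Claim sends the subspace $\beta\subseteq X^{S}$ to $\beta\times X\subseteq X^{S}\times X$. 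First I would record the routine facts: a presentation of $\beta$ over $S$ yields one of $\beta\times X$ over $S\sqcup 1$ (so the map is well defined), it preserves nonemptiness (this is also what makes \eqref{eq:poset_product} an injection to begin with), and because $X\neq\emptyset$ it is injective and reflects the order. Its image obviously contains $\hat 0=X^{S\sqcup 1}$. So all the content is to show the image is an order ideal, i.e.\ downward closed.

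For that, I would take $\gamma\in P_{\mathcal A}(S\sqcup 1)$ with $\gamma\le\beta\times X$ for some $\beta\in P_{\mathcal A}(S)$ --- under reverse inclusion this means $\gamma\supseteq\beta\times X$ --- fix a presentation $\gamma=\bigcap_{\alpha\in F}\pi_{\iota_\alpha}^{-1}(A_{i_\alpha})$ with $F$ finite and $\iota_\alpha\colon S_{i_\alpha}\hookrightarrow S\sqcup 1$, and split $F$ into the indices with $\ast\notin\mathrm{im}(\iota_\alpha)$ and those with $\ast\in\mathrm{im}(\iota_\alpha)$. In the first case $\iota_\alpha$ factors through $S$ and the condition is a cylinder, $\pi_{\iota_\alpha}^{-1}(A_{i_\alpha})=B_\alpha\times X$ for a preimage $B_\alpha\subseteq X^{S}$. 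If I can show that the second kind of index never occurs, then $\gamma=\bigl(\bigcap_{\alpha\in F}B_\alpha\bigr)\times X$, and $\delta:=\bigcap_{\alpha\in F}B_\alpha$ is a finite intersection of $\mathcal A$-preimages over $S$ which is nonempty since it contains $\beta$; hence $\delta\in P_{\mathcal A}(S)$ and $\gamma$ is its image. (This also shows the principal ideal below $\beta\times X$ is the image of the principal ideal below $\beta$, which is the form in which the Claim gets used.)

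The single real step --- and the only place Hypothesis \ref{hyp:no_axis} is used --- is to rule out an index $\alpha$ with $\iota_\alpha(s_{0})=\ast$ for some $s_{0}\in S_{i_\alpha}$. Here I would argue: from $\beta\times X\subseteq\gamma\subseteq\pi_{\iota_\alpha}^{-1}(A_{i_\alpha})$ and nonemptiness of $\beta$, pick any $b\in\beta$; then $\{b\}\times X\subseteq\pi_{\iota_\alpha}^{-1}(A_{i_\alpha})$, so $\pi_{\iota_\alpha}(\{b\}\times X)\subseteq A_{i_\alpha}$. Identifying $X^{S_{i_\alpha}}=X^{S_{i_\alpha}-\{s_{0}\}}\times X$ with the second factor the $s_{0}$-coordinate, this image is exactly $\{\bar b\}\times X$, where $\bar b=(b_{\iota_\alpha(s)})_{s\in S_{i_\alpha}-\{s_{0}\}}$ makes sense because $\iota_\alpha$ restricted to $S_{i_\alpha}-\{s_{0}\}$ lands in $S$. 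So $\{\bar b\}\times X\subseteq A_{i_\alpha}$, contradicting Hypothesis \ref{hyp:no_axis}. I expect this to be the crux of the argument, and I would emphasize what makes it work: multiplying by $\hat 0\in P_{\mathcal A}(1)$ forces the comparison subspace to be a pure cylinder $\beta\times X$, and the fact that every element of $P_{\mathcal A}$ is a nonempty subspace is what lets us produce the point $b$ and hence a genuine coordinate axis inside some $A_i$.
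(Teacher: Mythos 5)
Your proposal is correct and follows essentially the same route as the paper's proof: you reduce to showing that every generator $\left(\pi^{S\sqcup 1}_{S_i}\right)^{-1}(A_i)$ containing $\beta\times X$ comes from an injection factoring through $S$, and you rule out injections hitting the extra point by producing a coordinate axis $\{\bar b\}\times X\subseteq A_i$ from a point $b\in\beta$, contradicting Hypothesis~\ref{hyp:no_axis}. Your write-up merely makes explicit a couple of steps the paper leaves implicit (the nonemptiness of $\beta$ used to choose the point, and the bookkeeping that the remaining generators are cylinders), so there is no substantive difference.
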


\begin{proof}Note that injectivity is obvious. Now suppose $B\in P_{\mathcal{A}}(S\sqcup 1)$ lies below $A\times \hat{0}$. We must show that $B$ is already in the image of the multiplication by $\hat{0}$, that is $B=B'\times \hat{0}$. Recall that the `bottom stratum' $\hat{0}\in P_{\mathcal{A}}(1)$ represents the entire space $X = X^1$. Thus the hypothesis is that $B\supseteq A\times X$ as subsets of $X^S\times X$.

Next, recall that the poset $P_{\mathcal{A}}(T)$ is formed by intersecting subspaces of the form 
\[
\left(\pi^T_{S_i}\right)^{-1} (A_i) \cong A_i \times X^{T\setminus j(S_i)} \text{ inside } X^{S_i} \times X^{T\setminus j(S_i)} \cong X^T
\]
for all injections $j:S_i\hookrightarrow T$. In particular, $B$ is the intersection of a collection of such subspaces where $T = S\sqcup 1$.

The claim would follow if we showed that all generators $\left(\pi^{T}_{S_i}\right)^{-1} (A_i) \supseteq B$ must come from injections $j: S_i \hookrightarrow S\sqcup 1$ that factor through some $j':S_i\hookrightarrow S$. Indeed, it would then follow that $B=B'\times X$ where $B'$ is the intersection of those $\left(\pi^S_{S_i}\right)^{-1} (A_i) \subseteq X^S$.

To see that the said factorization holds, for every such inclusion $j:S_i\hookrightarrow S\sqcup 1$ one gets an inclusion
\[
A\times X \subseteq B \subseteq \left(\pi^{T}_{S_i}\right)^{-1} (A_i)
\]
If it were the case that $1\in j(S_i)$ then $A_i$ would already contain a coordinate axis $\{\bar{a}\}\times X$, contradicting Hypothesis \ref{hyp:no_axis}. It follows that $j$ factors through $S$.
\end{proof}



Let us call an element $\beta\in P_{\mathcal{A}}(S)$ \emph{decomposable} if it is of the form $\beta'\times \hat{0}$ for some $\beta'\in P_{\mathcal{A}}(S')$, where $S' \subset S$ is a proper subset. If no such decompositions exist, call $\beta$ \emph{indecomposable}. Note that this is \emph{not} the same definition of indecomposability that is introduced just before Lemma 4.13 of \cite{spectralsequencestratification}.

With this definition of decomposability, the proof of Lemma 4.13 applies verbatim (in fact, the present definition of decomposability relates to that proof more naturally). Furthermore using the above Claim \ref{claim:order_ideal} to guarantee the order ideal assumption, the original proof of Theorem 4.15 using Lemma 4.14 is now valid as stated. To summarize, Theorem 4.15 remains valid if we assume in addition that the arrangement of subspaces $\mathcal A$ satisfies Hypothesis \ref{hyp:no_axis}. 

\section{Rectification} The next mistake which we hope to correct appears in the proof of representation stability with integer coefficients -- Subsection 4.6 of \cite{spectralsequencestratification}. The problem there is the repeated assertion that the chain level construction $S \mapsto R\Gamma^{-\bullet}(M^S,\mathbb D L^\bullet(S))$ is a twisted commutative algebra in chain complexes.  This is not true as stated since the functor $R \Gamma$ is not symmetric monoidal (it is only symmetric monoidal in the sense of `higher algebra', i.e.\ up to coherent homotopy); similar remarks apply to the functor $\mathbb D$. Thus one gets only what might be called a \emph{twisted $E_\infty$-algebra}, for which no good theory of representation stability has been developed (as of this time). In the proof of representation stability for cohomology with coefficients in a field this causes no problem since in this case we only need to work with the twisted commutative algebra given by the homology. But for the proof of stability with integer coefficients \cite[Subsection 4.6]{spectralsequencestratification} one needs to work on the chain level, due to the lack of a good K\"unneth isomorphism, and there the lack of strict commutativity means that representation stability theory does not apply. 

One can prove by an abstract rectification theorem that any twisted $E_\infty$-algebra is quasi-isomorphic to a genuine twisted commutative algebra, by arguments much like those in Proposition \ref{prop} below. This applies in particular to the functor $S \mapsto R\Gamma^{-\bullet}(M^S,\mathbb D L^\bullet(S))$. But this abstract rectification procedure will not necessarily preserve any finiteness properties, and the goal in Subsection 4.6 of the paper is precisely to construct a finitely generated FI-module from this twisted commutative algebra. Thus one needs to find a way to rectify and obtain a strict twisted commutative algebra without destroying the combinatorial structure of the algebra that goes into the proof. 

Let us make explicit what this combinatorial structure is. Let $\mathcal P$ be the category of pairs $(S,\beta)$ with $S$ a finite set and $\beta \in P_{\mathcal A}(S)$ a stratum; a morphism $(S,\beta) \to (S',\beta')$ is a bijection $f \colon S \to S'$ such that $\beta \geq f^\ast(\beta')$. Note that $P_{\mathcal A}(\varnothing) = \{\ast\}$. The category $\mathcal P$ is symmetric monoidal using $\sqcup$ on sets and the product maps $P_{\mathcal A}(S) \times P_{\mathcal A}(T) \hookrightarrow P_{\mathcal A}(S \sqcup T)$. The assignment 
$$ (S,\beta) \mapsto R\Gamma^{-\bullet}(\overline S_\beta,\mathbb D R) $$
is a lax symmetric monoidal $\infty$-functor $\mathcal P \to \mathrm{Ch}_R$, where $\overline S_\beta$ is the closure of the stratum inside $M^S$ corresponding to $\beta$. For every finite set $S$, the poset $P_{\mathcal A}(S)^{\mathrm{op}}$ is a subcategory of $\mathcal P$, and the constructed complex $R\Gamma^{-\bullet}(M^S,\mathbb D L^\bullet(S))$ is the total homotopy cofiber of the restricted diagram $P_{\mathcal A}(S)^{\mathrm{op}} \to \mathrm{Ch}_R$. Indeed, this follows from the fact that the construction $L^\bullet(S)$ can be identified with the fiber of a map to a homotopy limit, i.e.\ a total homotopy fiber \cite[Remark 3.2]{spectralsequencestratification}, and then $\mathbb DL^\bullet(S)$ is a total homotopy cofiber. What this means in more concrete terms is that $R\Gamma^{-\bullet}(M^S,\mathbb D L^\bullet(S))$ is the totalization of a double complex whose terms are of the form $R\Gamma^{-\bullet}(\overline S_\beta, \mathbb D R)$, where $\overline S_\beta$ is the closure of a stratum inside $M^S$. The differentials in the resulting complex are induced by the inclusions among the closures of strata. Moreover, the assignment $S \mapsto R\Gamma^{-\bullet}(M^S,\mathbb DL^\bullet(S))$ is a twisted $E_\infty$-algebra of chain complexes, whose product is induced by the K\"unneth maps
$$ R\Gamma^{-\bullet}(\overline S_\alpha, \mathbb D R) \otimes R\Gamma^{-\bullet}(\overline S_\beta, \mathbb D R) \to R\Gamma^{-\bullet}(\overline S_\alpha \times \overline S_\beta, \mathbb D R).$$

This product is the further combinatorial structure needed for the argument in Section 4 of \cite{spectralsequencestratification}. The construction in Section 3 of \cite{spectralsequencestratification} in fact shows that to any lax symmetric monoidal $\infty$-functor  $\mathcal P \to \mathrm{Ch}_R$ one can associate a twisted $E_\infty$-algebra $\mathrm{FB} \to \mathrm{Ch}_R$. Quasi-isomorphic functors produce quasi-isomorphic twisted $E_\infty$-algebras. And crucially, if the $\infty$-functor $\mathcal P \to \mathrm{Ch}_R$ is strict, i.e.\ a lax symmetric monoidal functor of $1$-categories, then the resulting twisted $E_\infty$-algebra is also strict, i.e.\ a twisted commutative algebra in the usual sense. A remark is that, as discussed above in Section \ref{subsec: product}, the monoidal product uses the Eilenberg--Zilber shuffle map; thus we are using that the Eilenberg--Zilber map is symmetric monoidal. 

Then the finiteness results in \cite[Section 4]{spectralsequencestratification} are obtained from the assumption that the  complexes $R\Gamma^{-\bullet}(\overline S_\beta, \mathbb D R)$,  i.e.\ the values of the functor $\mathcal P \to \mathrm{Ch}_R$, have finitely generated cohomology that vanishes in a range.


The missing ingredient to getting a strict twisted commutative algebra is the following.
\begin{proposition}\label{prop} The $\infty$-functor $\mathcal P \to \mathrm{Ch}_R$ given by Borel--Moore chains
$$ \beta \mapsto R\Gamma^{-\bullet}(\overline S_\beta,\mathbb D R) $$
is quasi-isomorphic to a lax  symmetric monoidal functor of $1$-categories.
\end{proposition}

\begin{proof}
The structure of a lax symmetric monoidal $\infty$-functor $\mathcal P \to \mathrm{Ch}_R$ is equivalent to an $ E_\infty$-algebra in the functor category  $[\mathcal P,\mathrm{Ch}_R]$ under Day convolution. We will argue that every $E_\infty$-algebra in $[\mathcal P,\mathrm{Ch}_R]$ is weakly equivalent to a commutative monoid, which then corresponds to a lax symmetric monoidal functor of $1$-categories.

Let us consider the category of chain complexes $\mathrm{Ch}_R$ as a model category using the model structure for which the fibrations (weak equivalences) are the degreewise surjections (quasi-isomorphisms). We give the functor category $[\mathcal P,\mathrm{Ch}_R]$ the following slight variant of the projective model structure: a natural transformation $F \to G$ is a fibration (weak equivalence) if $F(S,\beta) \to G(S,\beta)$ is a fibration (weak equivalence) for all \emph{nonempty} $S$. Note that if $F \to G$ is a cofibration then $F(\varnothing,\ast) \to G(\varnothing,\ast)$ is an isomorphism.

We claim that this model category satisfies the \emph{commutative monoid axiom} of White \cite[Definition 3.1]{white}, which by \cite[Theorem 3.2]{white} implies the existence of an induced model structure on the category of commutative monoids in $[\mathcal P,\mathrm{Ch}_R]$. Secondly, the model category $[\mathcal P,\mathrm{Ch}_R]$ is \emph{symmetric flat} in the terminology of Pavlov--Scholbach \cite{ps}, which by their abstract rectification theorem \cite[Theorem 1.2]{ps} implies that the categories of $ E_\infty$-algebras and commutative monoids in  $[\mathcal P,\mathrm{Ch}_R]$ are Quillen equivalent, and finishes the proof.

Let $h \colon G \to F$ be a trivial cofibration. To verify the commutative monoid axiom we mush check that $h^{\Box n}/\mathbb S_n$ is a trivial cofibration as well, where $h^{\Box n}$ denotes the $n$-fold pushout-product of $h$. Note that 
$$ F^{\otimes n}(S,\beta)  = \!\! \bigoplus_{\substack{S = S_1 \sqcup \ldots \sqcup S_n \\ \beta = \beta_1 \times \ldots \times \beta_n}}\!\! F(S_1,\beta_1) \otimes \ldots \otimes F(S_n,\beta_n)$$
and that under this direct sum decomposition, $h^{\Box n}$ is a direct sum of maps of the form
$$ h(S_1,\beta_1)\, \Box\, h(S_2,\beta_2) \,\Box\, \ldots \,\Box\, h(S_n,\beta_n). $$
We will consider two types of summand. If one or more of the $S_i$ is empty, then one of the components of the pushout-product is an isomorphism, and then so is the whole pushout-product; clearly the corresponding components of $h^{\Box n}/\mathbb S_n$ are also isomorphisms and in particular trivial cofibrations. Restricting instead only to those summands of $F^{\otimes n}(S,\beta)$ such that none of the sets $S_i$ are empty, we may write
\begin{gather*}  \bigoplus_{\substack{S = S_1 \sqcup \ldots \sqcup S_n, \text{ all } S_i \neq \varnothing \\ \beta = \beta_1 \times \ldots \times \beta_n}}\!\! F(S_1,\beta_1) \otimes \ldots \otimes F(S_n,\beta_n) \\  \cong R[\mathbb S_n] \otimes \!\!\bigoplus_{\substack{S = S_1 \sqcup \ldots \sqcup S_n, \text{ all } S_i \neq \varnothing\\  \min(S_1) < \ldots < \min(S_n) \\ \beta = \beta_1 \times \ldots \times \beta_n}}\!\! F(S_1,\beta_1) \otimes \ldots \otimes F(S_n,\beta_n). \end{gather*}
But then the corresponding summands of $(F^{\otimes n}/\mathbb S_n)(S,\beta)$ is precisely 
$$ \bigoplus_{\substack{S = S_1 \sqcup \ldots \sqcup S_n, \text{ all } S_i \neq \varnothing\\  \min(S_1) < \ldots < \min(S_n) \\ \beta = \beta_1 \times \ldots \times \beta_n}}\!\! F(S_1,\beta_1) \otimes \ldots \otimes F(S_n,\beta_n),$$
and the corresponding component of $h^{\Box n}/\mathbb S_n$ is then the pushout-product $$ h(S_1,\beta_1)\, \Box\, h(S_2,\beta_2) \,\Box\, \ldots \,\Box\, h(S_n,\beta_n), $$ which is a trivial cofibration by the usual monoid axiom.

We should now verify symmetric flatness, i.e.\ that if $h$ is a cofibration, and $y$ is an equivariant weak equivalence\footnote{Meaning an equivariant map which is a weak equivalence on the underlying objects.} between two objects with $\mathbb S_n$-action, then $y \Box_{\mathbb S_n} h^{\Box n}$ is a weak equivalence. The argument is extremely similar to the verification of the commutative monoid axiom, and uses again that an $n$-fold pushout-product $h^{\Box n}$ of a cofibration $h$ decomposes as a direct sum of isomorphisms and summands on which $\mathbb S_n$ acts freely. Thus a similar argument shows that $y \Box_{\mathbb S_n} h^{\Box n}$ is pointwise given by a direct sum of pushout-products of the form $y_1 \Box h_1 \Box \ldots \Box h_n$ where $y_1$ is a weak equivalence and each $h_i$ is a cofibration in $\mathrm{Ch}_R$; the fact that such a pushout-product is a weak equivalence says precisely that the model category $\mathrm{Ch}_R$ is \emph{flat} \cite[Definition 2.1(vi)]{ps}, which is easy to show.  
\end{proof}

Note that $[\mathcal P,\mathrm{Ch}_R]$ does not have cofibrant unit, which is why we could not directly quote the rectification theorem for $ E_\infty$-algebras proven by White and we instead use the result of Pavlov--Scholbach.  

Introducing the category $\mathcal P$ and formulating the arguments in terms of monoidal functors out of $\mathcal P$ also clarifies the subsequent discussion surrounding Lemma 4.18 in \cite{spectralsequencestratification}. We may formulate a more general version of Lemma 4.18 in terms of the category $\mathcal P$ as follows:

\begin{proposition}Let $F \colon \mathcal P \to \mathrm{Ch}_R$ be a lax symmetric monoidal functor with the following properties:
\begin{enumerate}
    \item The product maps $F(S,\beta) \otimes F(1,\hat 0) \to F(S\sqcup 1,\beta \times \hat 0)$ are quasi-isomorphisms.\footnote{Note that in accordance with the previous part of this corrigendum, we are now using the modified definition of what it means for a stratum to be decomposable.} 
    \item The homology $ H_\ast(F(S,\beta))$ is finitely generated for all $(S,\beta)$. If $\sigma(S,\beta)=p$ then $H_k(F(S,\beta))=0$ for $k>d \vert S \vert - 2p$, and $H_{d\vert S \vert-2p}(F(S,\beta))$ is a free $R$-module.
    \item $H_d(F(1,\hat 0))\cong R$, where $\hat 0 \in P_{\mathcal A}(1)$ denotes the bottom element. 
\end{enumerate}
Then $F$ is quasi-isomorphic to a functor $C$ such that:
\begin{enumerate}
    \item The product maps $C(S,\beta) \otimes C(1,\hat 0) \to C(S \sqcup 1,\beta \times \hat 0)$ are isomorphisms.
    \item $C(S,\beta)$ is a bounded complex of finitely generated free modules, vanishing above degree $d \vert S \vert - 2 \sigma (S,\beta)$.
    \item $C_d(1,\hat 0) \cong R$. 
\end{enumerate}
\end{proposition}

The proof is the same as that of Lemma 4.18 of \cite{spectralsequencestratification}. This formulation also corrects an oversight in the formulation of Lemma 4.18: there the system of quasi-isomorphisms is not explicitly assumed to be compatible with the symmetric group actions, which is necessary in order that the construction produces a twisted commutative algebra. Here the symmetric group actions are part of the structure of the category $\mathcal P$.

\printbibliography

@article {spectralsequencestratification,
	AUTHOR = {Petersen, Dan},
	TITLE = {A spectral sequence for stratified spaces and configuration
	spaces of points},
	JOURNAL = {Geom. Topol.},
	FJOURNAL = {Geometry \& Topology},
	VOLUME = {21},
	YEAR = {2017},
	NUMBER = {4},
	PAGES = {2527--2555},
	ISSN = {1465-3060},
	MRCLASS = {55R80 (14F25 32S60 55N30 55T05)},
	MRNUMBER = {3654116},
}

\end{document}